\documentclass[12pt]{amsproc}
\newtheorem{theorem}{\sc Theorem}
\newtheorem{lemma}[theorem]{\sc Lemma}

\begin{document} 
\author{ Raimundo Bastos}
\address{ Department of Mathematics, University of Brasilia,
Brasilia-DF, 70910-900 Brazil }
\email{bastos@mat.unb.br}
\author{ Pavel Shumyatsky }
\address{ Department of Mathematics, University of Brasilia,
Brasilia-DF, 70910-900 Brazil }
\email{pavel@unb.br}

\keywords{Finite groups, commutators}
\subjclass[2010]{20D30, 20F16, 20D25}

\thanks{This work was supported by CNPq-Brazil. }

\title[nilpotency]{A Sufficient Condition for Nilpotency of the Commutator Subgroup }
\begin{abstract} Let $G$ be a finite group with the property that if $a,b$ are commutators of coprime orders, then $|ab|=|a||b|$. We show that $G'$ is nilpotent.
\end{abstract}
\maketitle

The following criterion of nilpotency of a finite group was established by B. Baumslag and J. Wiegold \cite{baubau}.
\begin{theorem}\label{bau} Let $G$ be a finite group in which $|ab|=|a||b|$ whenever the elements $a,b$ have coprime orders. Then $G$ is nilpotent.
\end{theorem}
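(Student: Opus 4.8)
The plan is to induct on $|G|$ by means of a minimal counterexample. Suppose $G$ is a group of least order satisfying the hypothesis but failing to be nilpotent. Observe first that the hypothesis passes to every subgroup $H\le G$: the condition ``$|ab|=|a|\,|b|$ whenever $|a|,|b|$ are coprime'' read inside $H$ is a weakening of the same condition read inside $G$. Hence, by minimality, every proper subgroup of $G$ is nilpotent, so $G$ is a minimal non-nilpotent group. I would then quote the classical description of such groups due to Schmidt: $G=P\rtimes Q$, where $P$ is the normal Sylow $p$-subgroup and $Q$ a Sylow $q$-subgroup with $p\ne q$, and the only prime divisors of $|G|$ are $p$ and $q$; furthermore $Q$ is not normal in $G$, since otherwise $G=P\times Q$ would be nilpotent.

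The key point is then to choose the right coprime pair to insert into the hypothesis, and I would build it out of the non-normality of $Q$. Pick $g\in G$ with $Q^{g}\ne Q$ and then $y\in Q\setminus Q^{g}$; in particular $y\ne 1$. Since $P$ is the full Sylow $p$-subgroup, the quotient map $G\to G/P$ restricts to an isomorphism on every Sylow $q$-subgroup of $G$ (the kernel there is a $q$-group meeting a $p$-group trivially, and the orders agree). Hence there is a unique $y'\in Q^{g}$ with $y'P=yP$; as $y\notin Q^{g}$ we get $y'\ne y$, and since $yP=y'P$ has order equal both to $|y|$ and to $|y'|$ in $G/P$, these two elements share an order $q^{s}$. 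Put $w:=y'y^{-1}$. Then $wP=P$, so $w\in P$, while $w\ne 1$; thus $|w|=p^{i}$ with $i\ge 1$, coprime to $|y|=q^{s}$. On the one hand $wy=y'$, so $|wy|=q^{s}$; on the other hand the hypothesis forces $|wy|=|w|\,|y|=p^{i}q^{s}$. Since $p^{i}>1$ this is absurd, and the theorem follows.

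The genuine obstacle, in my view, is identifying this pair $(w,y)$. The instinctive approach — take $v\in P$ moved by a generator $y$ of $Q$ and study $vy$ — does not work: from $|vy|=|v|\,|y|$ one extracts only that the ``norm'' $\prod_{k}{}^{y^{k}}v$ is a nontrivial element of $C_P(y)$, and in a Schmidt group $C_P(y)$ need not be trivial, so no contradiction ensues. What succeeds is to produce, using nothing but the Sylow structure, a product $wy$ of a nontrivial $p$-element and a $q$-element that is \emph{forced} to be a pure $q$-element; against this, multiplicativity of orders on coprime pairs cannot survive. The remaining ingredients — reducing to a minimal non-nilpotent group and appealing to Schmidt's structure theorem — are routine.
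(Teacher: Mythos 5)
Your proof is correct. Bear in mind, though, that the paper does not prove Theorem \ref{bau} at all --- it quotes it from \cite{baubau} --- so the comparison is really with the standard argument, which the paper isolates (in commutator-restricted form) as Lemma \ref{bbb}: if $|x|$ is coprime to $|[x,y]|$, then $x[x,y]=y^{-1}xy$ is a conjugate of $x$ and so has order $|x|$, while the hypothesis would force order $|x|\,|[x,y]|$; hence $[x,y]=1$. After the same reduction you perform (minimal counterexample, Schmidt's classification of minimal non-nilpotent groups $G=P\rtimes Q$), one applies this with $x\in Q$ and $y\in P$, noting $[x,y]\in P$ by normality of $P$, to conclude that $Q$ centralizes $P$ --- an immediate contradiction. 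Your route is a genuine variant: rather than the conjugation identity, you manufacture from two distinct Sylow $q$-subgroups $Q$ and $Q^{g}$ an equation $y'=wy$ with $w\in P\setminus\{1\}$ and $y,y'$ of $q$-power order, and feed the coprime pair $(w,y)$ into the hypothesis; every step checks out (the restriction of $G\to G/P$ to any Sylow $q$-subgroup is indeed an isomorphism onto $G/P$, so $y'$ exists, is unique, differs from $y$, and has order $q^{s}$). The trade-off: the conjugation trick is marginally shorter and, crucially, survives the restriction of the hypothesis to commutators (since $[x,y]$ is itself a commutator), which is why it powers Theorem \ref{main}; your pair $(w,y)$ has no reason to consist of commutators, so your argument would not transfer. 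Finally, your closing remark correctly identifies that studying $vy$ for $v\in P$ fails, but the classical fix is not to change the $p$-part of the product --- it is to apply the hypothesis to $x$ and $[x,v]$ instead, which is exactly Lemma \ref{bbb}.
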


Here the symbol $|x|$ stands for the order of the element $x$ in a group $G$. In the present article we establish a similar criterion of nilpotency for the commutator subgroup $G'$. Recall that an element $g\in G$ is a commutator if $g=x^{-1}y^{-1}xy$ for suitable $x,y\in G$. By definition, the commutator subgroup $G'$ is the subgroup of $G$ generated by all commutators. The purpose of the present article is to prove the following theorem.

\begin{theorem}\label{main} Let $G$ be a finite group in which $|ab|=|a||b|$ whenever the elements $a,b$ are commutators of coprime orders. Then $G'$ is nilpotent.
\end{theorem}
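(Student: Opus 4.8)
The plan is to argue by contradiction, taking $G$ of least order among the counterexamples. Since a commutator of a subgroup of $G$ is a commutator of $G$, the hypothesis is inherited by subgroups, so every proper subgroup $H$ of $G$ has $H'$ nilpotent. From this one sees that $G$ is soluble or perfect: if $G$ is not soluble, the last term $D$ of its derived series is a non-trivial perfect subgroup satisfying the hypothesis with $D'=D$, so minimality gives $D=G$. These two cases are treated separately.

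In the perfect case, minimality forces every proper subgroup of $G$ to have nilpotent --- hence metanilpotent, hence soluble --- commutator subgroup, so every maximal subgroup of $G$ is soluble; together with the restrictions this imposes on normal subgroups it narrows $G$ down to a (quasi)simple, in fact essentially to a minimal simple, group. One then invokes Ore's theorem that every element of a non-abelian simple group is a commutator (with its refinements for central extensions), together with the fact --- a form of the contrapositive of Theorem~\ref{bau} --- that a non-nilpotent finite group has elements $x,y$ of coprime orders with $|xy|\neq|x||y|$; for instance, in $A_{5}$ the product of an element of order $3$ and one of order $5$ never has order $15$. Realising such $x,y$ as genuine commutators of $G$ contradicts the hypothesis.

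The soluble case is the heart of the matter, and turns on the following mechanism. Let $V$ be an elementary abelian $p$-subgroup of $G$ normalised by a commutator $c$ of $G$ whose order $n$ is prime to $p$, and suppose $[V,c]\neq1$. Every element of $[V,c]$ has the form $[v,c]$ with $v\in V$, hence is a commutator of $G$; and if $1\neq x\in[V,c]$ then, writing $V$ additively and letting $\sigma$ be the automorphism of $V$ induced by conjugation by $c$, we get $(xc)^{n}=(1+\sigma+\cdots+\sigma^{n-1})(x)=0$, since coprimality makes the norm endomorphism vanish on $[V,c]$, while the image of $xc$ in $V\langle c\rangle/V\cong\langle c\rangle$ is a generator. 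Thus $|xc|=n=|c|$, whereas $|x|\,|c|=p\,|c|$, contradicting the hypothesis. So the whole problem reduces to producing, inside a soluble group with non-nilpotent commutator subgroup, an elementary abelian $p$-subgroup on which some commutator of $p'$-order acts non-trivially.

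For this I would pass to the Fitting subgroup $F=F(G)$, using $C_{G}(F)\le F$ and the fact that $G'\not\le F$ (otherwise $G'$ is nilpotent): some commutator of $G$ fails to centralise $F$, hence fails to centralise $O_{p}(G)$ for some prime $p$. It then remains to upgrade this to a commutator of $p'$-order and an elementary abelian $p$-subgroup it moves --- natural candidates being $\Omega_{1}(Z(O_{p}(G)))$ and, more generally, $c$-invariant subgroups carved out of $O_{p}(G)$ by coprime-action arguments, guided by Schmidt's description of the minimal non-nilpotent subgroups of $G'$ and by the fact that, in the minimal counterexample, $G''$ is nilpotent and so $G'$ is metanilpotent. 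I expect this to be where essentially all the difficulty lies: the hypothesis constrains only \emph{commutators}, while powers, primary parts and homomorphic images of a commutator need not be commutators, so the delicate point --- both here and in any passage to quotients --- is to ensure that the ``acting'' element can be taken to be a genuine commutator of $G$, not merely an element of $G'$.
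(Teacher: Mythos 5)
Your architecture (minimal counterexample, soluble/perfect dichotomy, a coprime-action mechanism exploiting the hypothesis) matches the paper's, and your norm computation is a sound but needlessly elaborate version of the paper's Lemma~\ref{bbb}: since $y^{-1}xy=x[x,y]$ and $[x,y]$ is always a commutator, one gets $[x,N]=1$ for any $N$ normalized by a commutator $x$ with $(|x|,|N|)=1$, with no need for $N$ to be elementary abelian. The real problem is that both branches of your argument stop short of a proof. In the soluble case you say explicitly that "essentially all the difficulty" lies in producing a commutator of $p'$-order acting nontrivially on a $p$-group, and you do not produce it; indeed that is not quite the right target. The missing idea is the Focal Subgroup Theorem: a Sylow $p$-subgroup $P$ of $G'$ is generated by elements $x^{-1}x^g$, i.e.\ by commutators that are already $p$-elements. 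Arguing by induction on $|G|$ so that $G''$ is nilpotent, one applies the lemma to each such generator acting on a Sylow $q$-subgroup $Q$ of $G''$ ($q\neq p$, $Q\trianglelefteq G$ by nilpotency of $G''$), getting $[P,Q]=1$ and hence $P\leq F(G)$. So the coprimality is arranged on the side of the \emph{acting} commutators by choosing them to be $p$-elements from the start, rather than by trying to extract a $p'$-part of a commutator (which, as you note, need not be a commutator).

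The non-soluble case as you sketch it also has genuine gaps. First, reducing to a quasisimple group requires showing $R\leq Z(G)$, which the paper again extracts from the Focal Subgroup Theorem ($G$ is generated by commutators that are $q'$-elements, forcing $F(G)\leq Z(G)$, then $R=Z_2(G)$, then $R=Z(G)$); you do not supply this. Second, Ore's theorem applies to simple groups, and it is \emph{not} true that every element of a quasisimple group is a commutator (there are exceptional covering groups), so "Ore with refinements for central extensions" does not close the case when $Z(G)\neq 1$; and to contradict the hypothesis you need the pair $x,y$ furnished by the contrapositive of Theorem~\ref{bau} to consist of commutators of $G$ itself, of coprime orders, which Ore does not hand you. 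Besides being incomplete, this route imports the classification of finite simple groups. The paper avoids all of this: using Frobenius' normal $2$-complement theorem and Thompson's replacement it manufactures a single commutator $a$ that is a $2$-element of order $2$ modulo $Z(G)$, then applies the Baer--Suzuki theorem to find $t$ with $[a,t]$ of odd order; $a$ inverts $[a,t]$ while Lemma~\ref{bbb} forces $a$ to centralize it --- a contradiction. You would need to supply arguments of this kind (or a correct CFSG-based substitute) before the proposal could be considered a proof.
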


In view of the above theorem one might suspect that a similar phenomenon holds for other group-words. Recall that a group-word $w=w(x_1,\dots,x_s)$ is a nontrivial  element of the free group $F = F(x_1,\dots,x_s)$ on free generators $x_1,\dots,x_s$. A word is a commutator word if it belongs to the commutator subgroup $F'$. Given a group-word $w$, it can be viewed as function defined in any group $G$. The subgroup of $G$ generated by the $w$-values is called the verbal subgroup of $G$ corresponding to the word $w$. It is usually denoted by $w(G)$. One might ask the following question.

 Let $w$ be a commutator word and $G$ a finite group with the property that if $a,b$ are $w$-values of coprime order, then $|ab|=|a||b|$. Is then the verbal subgroup $w(G)$ nilpotent?

Asking a similar question for noncommutator words would not be interesting since an easy counter-example is provided just by any nonabelian simple group $G$, say of exponent $e$, and the word $x^n$, where $n$ is a divisor of $e$ such that $e/n$ is prime.  Even in the case of commutator words the answer to our question is negative: Kassabov and Nikolov showed in \cite{kani} that for any $n\geq7$ the alternating group $A_n$ admits a commutator word all of whose nontrivial values have order 3. We suspect that the answer to the question is positive in the case of multilinear commutator words, that is, words having a form of a multilinear Lie monomial (like for example $[[x_1,[x_2,x_3]],[x_4,x_5]]$).

Throughout the remaining part of this short note $G$ denotes a finite group satisfying the hypothesis of Theorem \ref{main}. We denote by $X$ the set of commutators in $G$. As usual, $\pi(K)$ stands for the set of primes dividing the order of a group $K$. The Fitting subgroup of $K$ is denoted by $F(K)$.

\begin{lemma}\label{bbb} Let $x\in X$ and $N$ be a subgroup normalized by $x$. If $(|x|,|N|)=1$, then $[x,N]=1$.
\end{lemma}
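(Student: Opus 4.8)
The plan is to prove the ostensibly stronger pointwise assertion that $[x,n]=1$ for every $n\in N$; since $[x,N]$ is generated by such commutators, this immediately gives $[x,N]=1$. The observation I would exploit is that, because $x$ is a commutator, the element $[x,n]$ is \emph{again} a commutator — directly from the definition $[x,n]=x^{-1}n^{-1}xn$ — and moreover $[x,n]$ lies in $N$: since $x$ normalizes $N$ and $n\in N$, we have $x^{-1}n^{-1}x\in N$, whence $[x,n]=(x^{-1}n^{-1}x)\,n\in N$.

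With that in hand, fix $n\in N$ and set $b=[x,n]$. By the remark above $b\in X$ and $b\in N$, so $|b|$ divides $|N|$ and is therefore coprime to $|x|$. Thus $x$ and $b$ are commutators of coprime orders, and the hypothesis of Theorem \ref{main} applies, giving $|xb|=|x|\,|b|$. On the other hand, $xb=x\cdot x^{-1}n^{-1}xn=n^{-1}xn=x^{n}$ is conjugate to $x$, so $|xb|=|x|$. Comparing the two values of $|xb|$ forces $|x|=|x|\,|b|$, hence $|b|=1$, i.e. $[x,n]=1$. As $n$ was arbitrary, $[x,N]=1$.

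There is essentially no hard step here; the only things requiring a moment's care are the two elementary verifications that $b$ genuinely lies in $N$ (which is exactly where the hypothesis that $x$ normalizes $N$ enters) and that the product $x\cdot[x,n]$ telescopes to the conjugate $x^{n}$. I would also note that the degenerate case $b=1$ creates no difficulty: the identity $|xb|=|x|\,|b|$ then simply reads $|x|=|x|$, and the argument still closes with the desired conclusion $[x,n]=1$.
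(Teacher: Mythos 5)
Your proposal is correct and is essentially identical to the paper's own argument: both observe that $[x,n]$ is a commutator lying in $N$, hence of order coprime to $|x|$, apply the hypothesis to get $|x[x,n]|=|x||[x,n]|$, and compare with $x[x,n]=x^{n}$ being a conjugate of $x$. The extra care you take in verifying $[x,n]\in N$ and in handling the degenerate case is sound but does not change the route.
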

\begin{proof} Choose $y\in N$. The order of the commutator $[x,y]$ is prime to that of $x$. Therefore we must have $|x[x,y]|=|x||[x,y]|$. However $x[x,y]=y^{-1}xy$. This is a conjugate of $x$ and so $|x[x,y]|=|x|$. Therefore $[x,y]=1$.
\end{proof}
\begin{lemma}\label{solu} If $G$ is soluble, then $G'$ is nilpotent.
\end{lemma}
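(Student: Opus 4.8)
The plan is to argue by contradiction, taking for $G$ a soluble counterexample of least order. The hypothesis is inherited by every subgroup $H$ of $G$, since a commutator of $H$ is a commutator of $G$. A soluble perfect group is trivial, so $G'<G$; thus $G'$ is a soluble group of smaller order satisfying the hypothesis, and minimality of $G$ forces $(G')'=G''$ to be nilpotent. On the other hand $G'$ is not nilpotent, so it contains a minimal non-nilpotent subgroup $S$, which by Schmidt's theorem has the form $S=P\rtimes\langle y\rangle$ with $P=S'$ a normal $p$-subgroup, $\langle y\rangle$ a cyclic $q$-subgroup for some prime $q\ne p$, and $[P,y]\ne1$. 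Since $P=S'\le G''$ and $G''$ is nilpotent, $P$ lies in $V:=O_p(G'')$, which is a normal $p$-subgroup of $G$; and the $q$-element $y\in G'$ does not centralize $V$.

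Next I would record two consequences of the hypothesis. First, Lemma~\ref{bbb} shows that a commutator of order prime to $p$ centralizes every normal $p$-subgroup of $G$, in particular $V$; hence $y$ itself cannot be a commutator. Secondly, for a commutator $a$ and an arbitrary $g\in G$ the element $a\,[a,g]=a^{g}$ is conjugate to $a$, so the hypothesis applied to the commutators $a$ and $[a,g]$ forces $[a,g]=1$ whenever $\gcd(|a|,|[a,g]|)=1$; note also that $[a,v]\in V$ when $a\in V$. Since $y$ lies in $G'=\langle X\rangle$ but does not centralize $V$, at least one of the commutator factors of $y$, say $x\in X$, fails to centralize $V$, and then the first consequence forces $p\mid|x|$.

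The remaining task is to squeeze an outright violation of the hypothesis out of this data, and I expect this to be the crux of the proof. The obstruction is structural: the natural witness to the non-nilpotency of $G'$ is the $p'$-element $y$ acting non-trivially on $V$, yet $y$ is provably not a commutator, so the hypothesis cannot be invoked for it; and the commutator $x$ that is available has order divisible by $p$, so pairing it with the $p$-element $[x,v]$ as in the second consequence gives nothing new. The way around this, I would argue, is to localize — pass to a minimal subgroup or section of $G$ on which the action on $V$ (or on a $G$-chief factor inside $O_p(G'')$) is still non-trivial — and then use the minimality of $G$ to pin its structure down to something very close to $V\rtimes\langle a\rangle$ with $\langle a\rangle$ acting faithfully, a situation in which one can finally exhibit a commutator of $p$-power order together with a commutator of order prime to $p$ whose product has order strictly smaller than the product of the two orders. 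Producing such a pair of honest commutators — bridging from the non-commutator witness $y$ to a genuine inequality among orders of commutators, despite the fact that powers of commutators need not be commutators — is the step I expect to demand the most care.
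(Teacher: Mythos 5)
Your argument is set up sensibly (the induction giving $G''$ nilpotent matches the paper, the reduction to a Schmidt subgroup $S=P\rtimes\langle y\rangle$ with $P\leq V:=O_p(G'')$ is correct, and your observation that $y$ cannot itself be a commutator is accurate), but the proof is not complete: your final paragraph is a plan, not an argument, and you say so yourself. The concrete missing ingredient is the Focal Subgroup Theorem, which is exactly what the paper uses to get past the obstruction you identify. That theorem gives that a Sylow $q$-subgroup of $G'$ is generated by the commutators it contains. Combined with your ``first consequence'' (every commutator of order prime to $p$ centralizes the normal $p$-subgroup $V$, by Lemma~\ref{bbb}), it shows that every Sylow $q$-subgroup of $G'$ for $q\neq p$ centralizes $V$ --- and your non-commutator witness $y$, being a $q$-element of $G'$, lies in one of them. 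So $[V,y]=1$, contradicting $[P,y]\neq1$. Without some such device for passing from ``commutators of $q$-power order'' to ``all $q$-elements of $G'$,'' the gap between the witness $y$ and the set $X$ of honest commutators cannot be bridged, and your proposed localization does not indicate how to bridge it.

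For comparison, the paper avoids Schmidt subgroups altogether: it fixes $p\in\pi(G')$ and $q\in\pi(G'')$ with $p\neq q$, uses the Focal Subgroup Theorem to write a Sylow $p$-subgroup $P$ of $G'$ as $\langle P\cap X\rangle$, applies Lemma~\ref{bbb} to conclude $[P,Q]=1$ for $Q$ the (normal, since $G''$ is nilpotent) Sylow $q$-subgroup of $G''$, and deduces $P\leq F(G)$; running over all such $p$ (with a separate easy remark when $G''$ is a $q$-group) gives $G'\leq F(G)$. Your route could be salvaged with the same tool, but as written it stops short of a proof.
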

\begin{proof} Arguing by induction on $|G|$ we can assume that the second commutator subgroup $G''$ is nilpotent. Suppose that there are two different primes $p\in\pi(G')$ and $q\in\pi(G'')$. Let $P$ be a Sylow $p$-subgroup of $G'$ and $Q$ a Sylow $q$-subgroup of $G''$. It is straightforward from the Focal Subgroup Theorem \cite[Theorem 7.3.4]{go} that $P$ is generated by $P\cap X$. Lemma \ref{bbb} tells us that if $x\in P\cap X$, we have $[Q,x]=1$. Therefore $[Q,P]=1$. It follows that $P$ is normal in $PG''$ and therefore $P\leq F(PG'')$. Since $PG''$ is normal in $G'$, we conclude that $P\leq F(G)$. This holds for every prime $p$ such that $G''$ is not a $p$-group. Therefore if $G''$ is divisible by at least two different primes, all Sylow subgroups of $G'$ belong to $F(G)$ and so $G'$ is nilpotent. If $G''$ is $q$-group, it is obvious that the Sylow $q$-subgroup of $G'$ is normal in $G$ and so again we conclude that all Sylow subgroups of $G'$ belong to $F(G)$. The proof is complete.
\end{proof}

\begin{proof}[Proof of Theorem \ref{main}]
Now suppose that $G$ is a counter-example of minimal order. So all proper subgroups in $G$ are soluble and we can assume that $G=G'$. Let $R$ be the soluble radical in $G$. It follows that $G/R$ is nonabelian simple. By Lemma \ref{solu} $R'$ is nilpotent. Suppose that $G$ is nonsimple and $R\neq1$ and, for a prime $q$, let $Q$ be the Sylow subgroup of $F(G)$. Let $T$ be the subgroup of $G$ generated by all commutators that are $q'$-elements. In view of the Focal Subgroup Theorem, all Sylow $p$-subgroups of $G'$ for $p\neq q$ are contained in $T$. Therefore the commutator subgroup of $G/T$ is a $q$-group. Since $G=G'$, we conclude that $G=T$. Combining Lemma \ref{bbb} with the Focal Subgroup Theorem we now deduce that $F(G)\leq Z(G)$.

Further, we remark that for any $x\in G$ the subgroup $\langle x,R\rangle$ is soluble and so $\langle x,R\rangle'$ is nilpotent. It follows that $[R,x]\leq F(G)=Z(G)$ and therefore $R=Z_2(G)$. In particular, $R$ is nilpotent and therefore $R=Z(G)$. Thus, our group $G$ is quasisimple.

Since $G$ does not possess a normal 2-complement, it follows from the Frobenius Theorem \cite[Theorem 7.4.5]{go} that $G$ contains a 2-subgroup $H$ and an element of odd order $b\in N_G(H)$ such that $[H,b]\neq1$. In view of Thompson's Theorem \cite[Theorem 5.3.11]{go} we can assume that $H$ is of nilpotency class at most two and $H/Z(H)$ is elementary abelian. We claim that $G$ contains an element $a$ such that 
\newline\newline
\noindent $a$ is a 2-element;\newline
\noindent $a\in X$;\newline
\noindent $a$ has order 2 modulo $Z(G)$.\newline

Indeed, since $H$ is of class at most two, all elements in $[H,g]$ are commutators for any $g\in H$. If for some $g\in H$ the subgroup $[H,g]$ is not contained in $Z(G)$, any element of $[H,g]$ that has order 2 modulo $Z(G)$ has the required properties. Therefore we assume that $[H,g]$ is contained in $Z(G)$ for any $g\in H$. In particular $H'\leq Z(G)$. It follows that $[H,b]\cap C_G(b)\leq Z(G)$ and all elements in $[H,b]$ are commutators modulo $Z(G)$. If $d\in[H,b]$ such that $d\not\in Z(G)$ and $d^2\in Z(G)$, then the commutator $[d,b]$ is as required. This proves the existence of an element $a$ with the above properties.

Now we fix such an element $a$. Since $G/Z(G)$ is nonabelian simple, it follows from the Baer-Suzuki Theorem \cite[Theorem 3.8.2]{go} that there exists an element $t\in G$ such that the order of $[a,t]$ is odd. On the one hand, it is clear that $a$ inverts $[a,t]$. On the other hand, by Lemma \ref{bbb}, $a$ must commute with $[a,t]$. This is a contradiction.
\end{proof}

\end{document}